\numberwithin{equation}{section}
\theoremstyle{plain}
\newtheorem{theorem}{Theorem}[section]
\newtheorem{lemma}[theorem]{Lemma}
\newtheorem{corollary}[theorem]{Corollary}
\newtheorem*{Froslemma}{Frostman Lemma}
\theoremstyle{definition}
\newtheorem{case[theorem]}{Case}
\theoremstyle{remark}
\newtheorem{remark}[theorem]{Remark}
\numberwithin{equation}{section}
\def\dH{\dim_{{\mathcal H}}}
\def\R{\Bbb R}
\begin{document}

\title{An $L^2$-identity and pinned distance problem} 


\author{Bochen Liu}

\date{today}

\keywords{}

\email{Bochen.Liu1989@gmail.com}
\address{Department of Mathematics, Bar-Ilan University, Ramat Gan, Israel}

\thanks{The work is supported by ERC Starting Grant No. 713927}

\begin{abstract}
Let $\mu$ be a Frostman measure on $E\subset\R^d$. The spherical average estimate
$$\int_{S^{d-1}}|\widehat{\mu}(r\omega)|^2\,d\omega\lesssim r^{-\beta}  $$
was originally used to attack Falconer distance conjecture, via Mattila's integral. In this paper we consider the pinned distance problem, a stronger version of Falconer distance problem, and show that spherical average estimates imply the same dimensional threshold on both of them. In particular, with the best known spherical average estimates, we improve Peres-Schlag's result on pinned distance problem significantly.

The idea in our approach is to reduce the pinned distance problem to an integral where spherical averages apply. The key new ingredient is the following identity. Using a group action argument, we show that for any Schwartz function $f$ on $\mathbb{R}^d$ and any $x\in\mathbb{R}^d$,
$$\int_0^\infty |\omega_t*f(x)|^2\,t^{d-1}dt\,=\int_0^\infty|\widehat{\omega_r}*f(x)|^2\,r^{d-1}dr,$$
where $\omega_r$ is the normalized surface measure on $r S^{d-1}$. An interesting remark is that the right hand side can be easily seen equal to
$$c_d\int\left|D_x^{-\frac{d-1}{2}}e^{-2\pi i t\sqrt{-\Delta}}f(x)\right|^2\,dt=c_d'\int\left|D_x^{-\frac{d-2}{2}}e^{2\pi i t\Delta}f(x)\right|^2\,dt.$$
An alternative derivation of Mattila's integral via group actions is also given in the Appendix.
\end{abstract}
\maketitle
\section{Introduction}
\subsection{Falconer distance conjecture}
Given $E\subset\R^d$, $d\geq 2$, one can define its distance set as
$$ \Delta(E)=\{|x-y|: x,y \in E \}.$$
The famous Falconer distance conjecture (\cite{Fal85}) states that $\Delta(E)$ has full Hausdorff dimension, or even positive Lebesgue measure, whenever the Hausdorff dimension of $E$, denoted by $\dH(E)$, is greater than $\frac{d}{2}$. It is known (\cite{Fal85}) that $d\geq 2$ is necessary and the dimensional threshold $\frac{d}{2}$ is, up to the end point, optimal. This conjecture can be seen as a continuous version of the Erd\H{o}s distance problem, which has already been solved by Guth and Katz in the plane (\cite{GK15}).
\begin{theorem}
	[Guth, Katz, 2015]
	Suppose $P\subset\R^2$ is a finite set of $N$ points. Then for any $\epsilon>0$ there exists a constant $C_\epsilon>0$ such that
	$$\#(\Delta(P))\geq C_\epsilon N^{1-\epsilon}.  $$
\end{theorem}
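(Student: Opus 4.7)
The plan is to use the Elekes--Sharir framework, which reduces the distinct distances problem to a line incidence bound in $\mathbb{R}^3$. Writing $D=\#\Delta(P)$ and $m(t)=\#\{(a,b)\in P^2:|a-b|=t\}$ for $t\in\Delta(P)$, I first note that $\sum_t m(t)=N(N-1)$, so Cauchy--Schwarz gives
$$Q:=\sum_t m(t)^2 \;\geq\; \frac{N^2(N-1)^2}{D}.$$
Thus any upper bound of the form $Q\lesssim N^3\log N$ yields $D\gtrsim N/\log N$, which is even stronger than the claimed $N^{1-\epsilon}$. Observe that $Q$ equals the number of ordered quadruples $(a,b,c,d)\in P^4$ with $|a-b|=|c-d|$.

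Next I would reinterpret $Q$ through rigid motions. Each such quadruple, after fixing orientation, is in bijection with an orientation-preserving rigid motion of $\mathbb{R}^2$ sending $a\mapsto c$ and $b\mapsto d$. The group of such motions is three-dimensional; Elekes and Sharir produced coordinates on this group in which, for each ordered pair $(p,q)\in P\times P$, the one-parameter family of motions sending $p$ to $q$ becomes an affine line $\ell_{p,q}\subset\mathbb{R}^3$. Then $Q$ is bounded, up to multiplicity, by the number of intersecting pairs among the $N^2$ lines $\{\ell_{p,q}\}_{p,q\in P}$.

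The main analytic input is the Guth--Katz line incidence theorem in $\mathbb{R}^3$: for $n$ lines, no more than $\sqrt{n}$ of which lie in a common plane or regulus, the number of intersecting pairs is $O(n^{3/2}\log n)$. To prove it I would use the polynomial method: choose a polynomial $g$ of degree $\approx n^{1/2}$ whose zero set bisects the line system via polynomial ham sandwich, bound incidences on each open cell by a Szemer\'edi--Trotter-type estimate, and handle lines contained in $Z(g)$ separately using algebraic geometry of ruled surfaces (flecnode polynomials, Cayley--Salmon).

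The principal obstacle is precisely the on-surface analysis together with verifying the non-regulus hypothesis for the Elekes--Sharir lines. One must show that no plane or regulus contains more than $\sim N$ of the $\ell_{p,q}$, which translates into a rigidity statement on $P$: many coplanar or co-regular Elekes--Sharir lines would force an extremal amount of symmetry in $P$ (many pairs of point-pairs sharing a common perpendicular-bisector structure), a configuration that can be ruled out combinatorially. Pushing this structural dichotomy through and combining with the cell-wise count produces $Q=O(N^3\log N)$, whence $D\gtrsim N/\log N$ and the theorem follows.
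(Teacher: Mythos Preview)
The paper does not give a proof of this theorem at all: it is stated in the introduction as background (the Erd\H{o}s distinct distances theorem of Guth and Katz), with a citation to \cite{GK15}, and then the paper moves on to the Falconer problem. So there is nothing to compare your proposal against.

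That said, your outline is an accurate high-level sketch of the actual Guth--Katz argument: Cauchy--Schwarz to reduce to the quadruple count $Q$, the Elekes--Sharir parametrization of rigid motions by lines in $\mathbb{R}^3$, the polynomial-method incidence bound for lines with the plane/regulus non-concentration hypothesis, and the verification that the Elekes--Sharir lines satisfy this hypothesis with parameter $O(N)=O(\sqrt{n})$. Two small caveats: the correspondence between quadruples and rigid motions needs the translations (rotation angle zero) to be handled separately, and the $\log N$ factor does not come directly from a single ``intersecting pairs $\lesssim n^{3/2}\log n$'' statement but rather from dyadically summing the $k$-rich point bounds $|P_k|\lesssim n^{3/2}/k^2$ over $2\le k\lesssim N$. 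With those adjustments your plan is exactly the Guth--Katz proof, and indeed yields the stronger conclusion $\#\Delta(P)\gtrsim N/\log N$.
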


Unlike the Erd\H{o}s distance problem, the Falconer distance conjecture is, however, far from being solved. Accumulating effort of different great mathematicians (see e.g., \cite{Fal85},\cite{Mat87},\cite{Sjo93},\cite{Bou94},\cite{Wol99},\cite{Erd05}), the best currently known results are due to Wolff  (\cite{Wol99}) in the plane and Erdo\~{g}an (\cite{Erd05}) in higher dimensions. They proved that $\Delta(E)$ has positive Lebesgue measure whenever $\dH(E)>\frac{d}{2}+\frac{1}{3}$. On the other hand, assuming $\dH(E)\geq 1$ in the plane, Bourgain \cite{Bou03} showed that there exists an absolute $\delta>0$ such that $\dH(\Delta(E))>\frac{1}{2}+\delta$. Very recently Keleti and Shmerkin (\cite{KS18}) improve Bourgain's result by showing that $\dH(\Delta(E))>\frac{1}{2}+\frac{5}{27}$ whenever $\dH(E)>1$ in the plane.

To obtain Wolff-Erdo\~{g}an's dimensional exponent (i.e., $\frac{d}{2}+\frac{1}{3}$), the following tool invented by Mattila \cite{Mat87} plays an important role. That is, to show that $\Delta(E)$ has positive Lebesgue measure, it suffices to prove that there exists a measure $\mu$ on $E$ such that
\begin{equation}\label{Mat} \mathcal{M}(\mu):=\int {\left( \int_{S^{d-1}} {|\widehat{\mu}(r \omega)|}^2 d\omega \right)}^2 r^{d-1} dr<\infty. \end{equation}

The following lemma provides a family of measures on $E$.
\begin{Froslemma}
	[see, e.g. \cite{Mat15}, Theorem 2.7]
	Suppose $E\subset\R^d$ and denote $\mathcal{H}^s$ as the $s$-dimensional Hausdorff measure. Then $\mathcal{H}^s(E)>0$ if and only if there exists a probability measure $\mu$ on $E$ such that 
$$\mu(B(x,r))\lesssim r^s $$
	for any $x\in\R^d$, $r>0$.
\end{Froslemma}

Since by definition $\dH(E)=\sup\{s:\mathcal{H}^s(E)>0\}$, Frostman Lemma implies that for any $s_\mu<\dH(E)$ there exists a probability measure $\mu$ on $E$ such that
\begin{equation}\label{Frostmanmeasure}\mu(B(x,r))\lesssim r^{s_\mu},\ \forall\ x\in\R^d,\ r>0.  
\end{equation}

In fact, the main results in Wolff's and Erdo\~{g}an's papers are, for any $\mu$ satisfying \eqref{Frostmanmeasure},
\begin{equation}\label{Wolff-Erdogan}
\int_{S^{d-1}} {|\widehat{\mu}(r \omega)|}^2 d\omega\lesssim_\epsilon r^{-\beta(s_\mu)+\epsilon},
\end{equation}
where
$$\beta(s)=\frac{d+2s-2}{4}, \ \  s\in[\frac{d}{2}, \frac{d}{2}+1].  $$

When $s$ is small, it is proved by Mattila (\cite{Mat87}) that \eqref{Wolff-Erdogan} holds with
\begin{equation*}\label{exp}
\beta(s)=
	\begin{cases}
	s, & s\in(0, \frac{d-1}{2}]\\
	\frac{d-1}{2}, & s\in [\frac{d-1}{2}, \frac{d}{2}]
	\end{cases}.
\end{equation*}
When $s$ is large, the best known result on \eqref{Wolff-Erdogan} is due to Luc\`{a} and Rogers (\cite{LR15}).
\vskip.125in
Plugging \eqref{Wolff-Erdogan} into \eqref{Mat}, it follows that
$$\mathcal{M}(\mu)\lesssim_\epsilon \int |\widehat{\mu}(\xi)|^2|\xi|^{-\beta(s_\mu)+\epsilon}\,d\xi,$$
which is known to be finite whenever $s_\mu+\beta(s_\mu)>d$ and $\epsilon$ small enough (see, e.g. \cite{Mat15}, Section 2.5). Solve it for $s_\mu$ to obtain $s_\mu>\frac{d}{2}+\frac{1}{3}$. 

\begin{remark}\label{Du's result}
	Shortly after this paper came out, \eqref{Wolff-Erdogan} was improved by Du-Guth-Ou-Wang-Wilson-Zhang (\cite{DGOWWZ18}) when $d\geq 3$ and later furthur improved by Du and Zhang (\cite{DZ18}) when $d\geq 4$. Now the best known dimensional exponent for Falconer distance conjecture is
	\begin{equation}
		\label{best-known}
		\dH(E)>\begin{cases}\frac{4}{3},&d=2\ (\text{Wolff})\\1.8,&d=3\ (\text{Du \emph{et al.}})\\\frac{d}{2}+\frac{1}{4}+\frac{1}{8d-4},&d\geq 4\ (\text{Du and Zhang})
			
		\end{cases}
	\end{equation}
\end{remark}

\subsection{Pinned distance problem}
A stronger version of the Falconer distance problem is the pinned distance problem, which states the following.
\vskip.25in
\emph{Pinned distance problem: How large the Hausdorff dimension of $E\subset\R^d$, $d\geq 2$ needs to be to ensure that there exists $x\in E$ such that the pinned distance set 
$$\Delta_x(E) = \{|x-y|:y\in E\}$$ has full Hausdorff dimension, or even positive Lebesgue measure? }
\vskip.25in
This problem was first studied by Peres and Schlag (\cite{PS00}). For $A\subset\R^d$, denote $|A|$ as its $d$-dimensional Lebesgue measure.

\begin{theorem}[Peres, Schlag, 2000]
Given $E\subset\R^d$, $d\geq 2$, then
\begin{equation}\label{PS}\dH( \{x\in\R^d: |\Delta_x(E)| = 0 \})\leq d+1-\dH(E).\end{equation}
In particular, if $\dH(E)>\frac{d+1}{2}$, there exists $x\in E$ such that $|\Delta_x(E)|>0$ .
\end{theorem}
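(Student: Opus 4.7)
The plan is to establish \eqref{PS}, from which the in-particular statement follows at once: if $\dH(E) > (d+1)/2$ then the exceptional set $F := \{x \in \R^d : |\Delta_x(E)| = 0\}$ has $\dH(F) < \dH(E)$, so any $x \in E \setminus F$ gives a pinned distance set of positive measure.

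Suppose for contradiction that $\dH(E) + \dH(F) > d+1$. By the Frostman Lemma, pick $s < \dH(E)$ and $t < \dH(F)$ with $s + t > d+1$, and probability measures $\mu$ on $E$, $\nu$ on $F$ satisfying $\mu(B(x,r)) \lesssim r^s$ and $\nu(B(x,r)) \lesssim r^t$. For each $x$ the pushforward $\nu_x := (d_x)_\ast \mu$, where $d_x(y) := |x-y|$, is a probability measure on $[0,\infty)$ supported on $\Delta_x(E)$; for $x \in F$ it must therefore be purely Lebesgue-singular. The contradiction will come from showing $\nu_x$ is absolutely continuous for $\nu$-a.e.\ $x$, which we extract from a quantitative $L^2$ bound.

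More precisely, using the disintegration identity $d\nu_x(t) = c_d\, t^{d-1}(\omega_t \ast \mu)(x)\, dt$, where $\omega_t$ denotes the normalized surface measure on $tS^{d-1}$, it suffices to show
\[ \mathcal{I} := \int_F \int_0^\infty |\omega_t \ast \mu(x)|^2\, t^{d-1}\, dt\, d\nu(x) < \infty, \]
as this forces $\nu_x$ to have a locally $L^2$ density for $\nu$-almost every $x$. Plancherel in $x$ expands the inner integral as a double integral in $(\eta,\eta')$ involving $\widehat\mu(\eta)\overline{\widehat\mu(\eta')}\widehat\nu(\eta'-\eta)\widehat{\omega_t}(\eta)\widehat{\omega_t}(\eta')$. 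Using $\widehat{\omega_t}(\xi) = \widehat{\omega_1}(t\xi)$ and the stationary-phase decay $|\widehat{\omega_1}(\xi)| \lesssim (1+|\xi|)^{-(d-1)/2}$, performing the $t$-integration against $t^{d-1}$ produces a singular kernel concentrated on the cone $|\eta|=|\eta'|$. Pairing with the Frostman Fourier characterizations $\int |\widehat\mu|^2|\xi|^{s-d}\,d\xi < \infty$ and $\int |\widehat\nu|^2|\xi|^{t-d}\,d\xi < \infty$, Cauchy--Schwarz and standard spherical-coordinate manipulations then give $\mathcal{I} < \infty$ exactly at the threshold $s + t > d+1$, contradicting the assumption.

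The main obstacle is the $t$-integration against the oscillating Bessel-type factors $\widehat{\omega_1}(t\eta)\widehat{\omega_1}(t\eta')$: the integrand is not absolutely integrable, and the resulting singular kernel on the cone $|\eta|=|\eta'|$ must be absorbed into a joint Frostman--Fourier energy bound with exactly the right bookkeeping to produce the $d+1$ threshold. One can alternatively apply the $L^2$ identity announced in this paper's abstract to rewrite the inner $t$-integral of $\mathcal{I}$ in a manifestly non-oscillatory wave-propagation form, trading the stationary-phase analysis for a direct energy estimate involving the half-wave propagator $e^{-2\pi it\sqrt{-\Delta}}$.
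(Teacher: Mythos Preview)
The paper does not prove this theorem. It is quoted as a prior result of Peres and Schlag \cite{PS00}, and the paper's own Theorem~\ref{pin-dist} (with its Corollary) strictly improves on it. So there is no proof in the paper to compare your proposal against.

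On the proposal itself: the overall architecture --- Frostman measures $\mu$ on $E$ and $\nu$ on the putative exceptional set $F$, the pushforward $\nu_x$, and the reduction to finiteness of $\mathcal{I}=\iint |\omega_t*\mu(x)|^2\,t^{d-1}\,dt\,d\nu(x)$ --- is correct and is precisely the set-up of Section~\ref{Sec-Sph-mean} in the paper. However, the heart of the matter, namely the sentence ``Cauchy--Schwarz and standard spherical-coordinate manipulations then give $\mathcal{I}<\infty$ exactly at the threshold $s+t>d+1$'', is an assertion, not an argument. You immediately concede this yourself by calling the $t$-integration ``the main obstacle'' and leaving it undone. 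As written, this is a proof outline, not a proof. It is also worth noting that Peres and Schlag's original argument is \emph{not} the one you sketch: they treat $y\mapsto |x-y|$ as a one-parameter family of generalized projections indexed by $x$ and invoke their abstract transversality machinery, rather than estimating the spherical-average integral $\mathcal{I}$ directly.

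Your closing remark, that one could instead invoke the paper's $L^2$-identity (Theorem~\ref{identity}) to rewrite the $t$-integral in non-oscillatory form, is exactly how the paper's own framework recovers and supersedes the Peres--Schlag bound: Theorem~\ref{identity} converts $\mathcal{I}$ into $\iint|\widehat{\omega_r}*\mu(x)|^2\,r^{d-1}\,dr\,d\nu(x)$, Lemma~\ref{lemma4} then feeds in a spherical-average decay exponent $\beta(s_\nu)$, and Theorem~\ref{operator} closes the argument whenever $s_\mu+\beta(s_\nu)>d$. With Wolff--Erdo\~{g}an's $\beta$ this already beats \eqref{PS}; even the elementary bounds $\beta(s)=s$ for $s\le\tfrac{d-1}{2}$ and $\beta(s)=\tfrac{d-1}{2}$ for $s\ge\tfrac{d-1}{2}$ recover the $\tfrac{d+1}{2}$ threshold of the ``in particular'' clause.
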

Later this problem was studied by different authors (\cite{OO15}, \cite{ITU16}, \cite{Shm16}, \cite{IL17}, \cite{shm17}, \cite{KS18}) and the estimate \eqref{PS} was recently improved by Iosevich and the author  (\cite{IL17}) when $\dH(E)>\frac{d+1}{2}$. However, for the pinned distance problem, the best known dimensional exponent is still Peres-Schlag's $\frac{d+1}{2}$. There are also some results on special classes of sets. For example, Keleti and Shmerkin (\cite{KS18}) proved that for planar sets $E\subset\R^2$, $\Delta_x(E)$ has full Hausdorff dimension for some $x\in E$ if $\dH(E)>1$ and $\dim_P(E)\leq 2\dH(E)-1$, where $\dim_P$ denotes the packing dimension. One can also see \cite{Orp17}, \cite{Shm16}, \cite{shm17}.
\vskip.125in
As we can see, there is a gap on the known dimensional threshold between Falconer distance problem and pinned distance problem. So it is very natural to ask if the exponent for Falconer distance problem is also sufficient for the pinned distance problem. This is the main result of this paper.

\begin{theorem}
	\label{pin-dist}
	Suppose $E\subset\R^d$, $d\geq 2$. Assuming \eqref{Wolff-Erdogan}, then 
$$\dH( \{x\in\R^d: |\Delta_x(E)| = 0 \})\leq \inf\{s:\dH(E)+\beta(s)>d\}.$$
In particular, if $\dH(E)+\beta(\dH(E))>d$, there exists $x\in E$ such that $\Delta_x(E)$ has positive Lebesgue measure.
\end{theorem}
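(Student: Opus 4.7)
The plan is to combine a two-measure potential-theoretic reduction with the key $L^2$-identity and the spherical average hypothesis \eqref{Wolff-Erdogan}. Let $F = \{x \in \R^d : |\Delta_x(E)| = 0\}$ and suppose for contradiction that $\dH(F) > s$ for some $s$ with $\dH(E) + \beta(s) > d$. By the Frostman Lemma, pick probability measures $\mu$ on $E$ (of Frostman exponent $s_\mu$ close to $\dH(E)$) and $\nu$ on $F$ (of Frostman exponent $s$) so that $s_\mu + \beta(s) > d$. Denoting by $\mu^x$ the pushforward of $\mu$ under $y \mapsto |x-y|$, it suffices to show $\mu^x$ has an $L^2(dt)$-density for $\nu$-a.e.\ $x$, since this forces $|\Delta_x(E)| > 0$ on a set of positive $\nu$-measure, contradicting the definition of $F$. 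By a standard Fatou/Markov argument applied to a mollification $\mu_\epsilon := \mu * \phi_\epsilon$, the required conclusion follows from the uniform-in-$\epsilon$ estimate
\[
\int_F \|\mu_\epsilon^x\|_{L^2(dt)}^2 \, d\nu(x) \lesssim 1.
\]

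By polar coordinates $\mu_\epsilon^x(t) \sim t^{d-1} (\omega_t * \mu_\epsilon)(x)$, and since $\mu$ has bounded support, $\|\mu_\epsilon^x\|_{L^2(dt)}^2$ is comparable to $\int_0^\infty |\omega_t * \mu_\epsilon(x)|^2 \, t^{d-1}\,dt$, matching the left-hand side of the paper's central identity. Invoking the identity rewrites this as $\int_0^\infty |\widehat{\omega_r} * \mu_\epsilon(x)|^2 \, r^{d-1}\,dr$; integrating against $\nu$ and switching the order of integration reduces the task to
\[
\int_0^\infty r^{d-1} \int_F |\widehat{\omega_r} * \mu_\epsilon(x)|^2 \, d\nu(x) \, dr \lesssim 1.
\]
Expanding $\widehat{\omega_r} * \mu_\epsilon(x) = \int_{S^{d-1}} \widehat{\mu_\epsilon}(r\omega)\, e^{2\pi i r x\cdot \omega}\, d\sigma(\omega)$, squaring, and integrating in $d\nu(x)$ produces the trilinear spherical form
\[
\int_{S^{d-1}}\!\int_{S^{d-1}} \widehat{\mu_\epsilon}(r\omega_1)\overline{\widehat{\mu_\epsilon}(r\omega_2)}\,\widehat{\nu}\bigl(r(\omega_2 - \omega_1)\bigr)\, d\sigma(\omega_1)\, d\sigma(\omega_2).
\]

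The main obstacle is to combine \eqref{Wolff-Erdogan} for $\mu$ on the sphere $rS^{d-1}$ with a corresponding spherical-average control for $\widehat{\nu}$ on the difference spheres $r(S^{d-1} - S^{d-1})$, in such a way that the resulting decay in $r$ is integrable against $r^{d-1}\,dr$ precisely when $s_\mu + \beta(s) > d$. A naive approach — using $|\widehat{\nu}| \leq 1$ pointwise, Cauchy--Schwarz on the spheres, and the spherical estimate for $\mu$ alone — gives only $r^{-\beta(s_\mu)}$ decay, which is insufficient. To recover the sharp threshold one must exploit the Frostman dimension of $\nu$: decompose $d\sigma \otimes d\sigma$ under $(\omega_1,\omega_2) \mapsto \omega_2 - \omega_1$, apply \eqref{Wolff-Erdogan} for $\nu$ on each resulting sphere slice, and balance the two spherical-average inputs via Cauchy--Schwarz or an equivalent Plancherel-type identity. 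Executing this balancing so that the combined exponent exactly reproduces the Falconer-type condition $\dH(E) + \beta(\dH(E)) > d$ is the core technical difficulty, and is where the identity earns its keep by converting a physical-side $L^2$-bound into an integral perfectly tailored for the spherical estimate.
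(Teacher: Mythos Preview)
Your setup and use of the $L^2$-identity are exactly right, but the final step has the roles of $\mu$ and $\nu$ reversed, and this is why you cannot close the argument. After the identity you arrive at
\[
\int_0^\infty r^{d-1}\int_F |\widehat{\omega_r}*\mu_\epsilon(x)|^2\,d\nu(x)\,dr,
\]
and you then try to feed the spherical average hypothesis \eqref{Wolff-Erdogan} to $\mu$. But note that $\widehat{\omega_r}*\mu_\epsilon(x)=(\widehat{\mu_\epsilon}\,d\omega_r)^\vee(x)$, so the inner integral is precisely
\[
\int \bigl|\widehat{g\,d\omega_r}\bigr|^2\,d\nu,\qquad g=\overline{\widehat{\mu_\epsilon}}\big|_{rS^{d-1}},
\]
which is the \emph{adjoint} (extension) form of \eqref{Wolff-Erdogan} applied to the measure $\nu$, not $\mu$; this is the paper's Lemma~\ref{lemma4}. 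That lemma gives immediately
\[
\int \bigl|\widehat{g\,d\omega_r}\bigr|^2\,d\nu\ \lesssim_\epsilon\ r^{-\beta(s)+\epsilon}\int_{S^{d-1}}|\widehat{\mu_\epsilon}(r\omega)|^2\,d\omega,
\]
so after integrating $r^{d-1}\,dr$ you are left with $\int|\widehat{\mu_\epsilon}(\xi)|^2|\xi|^{-\beta(s)+\epsilon}\,d\xi$, an energy integral for $\mu$ that is uniformly bounded (Lemma~\ref{lemma1}) exactly when $s_\mu+\beta(s)>d$. This is the threshold in the theorem, with $\beta$ evaluated at the dimension of $F$ and the raw dimension on $E$ --- matching your hypothesis $s_\mu+\beta(s)>d$.

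Your trilinear expansion and the proposed decomposition of $d\sigma\otimes d\sigma$ under $(\omega_1,\omega_2)\mapsto\omega_2-\omega_1$ are therefore unnecessary; the difficulty you identify (``$r^{-\beta(s_\mu)}$ is insufficient'') disappears once you apply the spherical estimate to the correct measure. No further balancing or Cauchy--Schwarz trick is needed.
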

Wolff-Erdo\~{g}an's estimate on \eqref{Wolff-Erdogan} implies the following.
\begin{corollary}
	Suppose $E\subset\R^d$, $d\geq 2$. Then
$$\dH( \{x\in\R^d: |\Delta_x(E)| = 0 \})\leq \begin{cases}\frac{3}{2}d+1-2\dH(E), &\dH(E)\in[\frac{d}{2}, \frac{d+1}{2}]\\
	d-\dH(E), &\dH(E)>\frac{d+1}{2}
\end{cases}.$$
In particular, if $\dH(E)>\frac{d}{2}+\frac{1}{3}$, there exists $x\in E$ such that $\Delta_x(E)$ has positive Lebesgue measure.
\end{corollary}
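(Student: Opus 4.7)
The plan is as follows. Fix a Frostman measure $\mu$ on $E$ with exponent $s_\mu$ arbitrarily close to $\dH(E)$, and suppose for contradiction that the exceptional set $F := \{x\in\R^d : |\Delta_x(E)| = 0\}$ has Hausdorff dimension strictly greater than $\inf\{s : s_\mu + \beta(s) > d\}$. Then Frostman's lemma produces a compactly supported probability measure $\lambda$ on $F$ with exponent $s_\lambda$ satisfying $s_\mu + \beta(s_\lambda) > d$. It suffices to show that the pinned distance measure $\nu_x := (d_x)_*\mu$, where $d_x(y) = |x-y|$, has an $L^2$ density on $[0,\infty)$ for $\lambda$-almost every $x$: the support of $\nu_x$ is contained in $\Delta_x(E)$, so an $L^2$ density forces $|\Delta_x(E)| > 0$, contradicting $x \in F$.

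To bound $\nu_x$ in $L^2$, mollify $\mu$ to $\mu^\epsilon := \mu*\rho_\epsilon$ for a Schwartz bump $\rho_\epsilon$. A polar-coordinates computation expresses the density of $\nu^\epsilon_x$ as $c\,t^{d-1}(\omega_t*\mu^\epsilon)(x)$. Since $\mu$ is compactly supported, $\nu^\epsilon_x$ is supported on a bounded $t$-interval away from the origin once $x$ lies outside a small neighbourhood of $\mathrm{supp}(\mu)$, so controlling $\|\nu_x^\epsilon\|_{L^2}^2$ uniformly in $\epsilon$ reduces, up to constants and bounded $t$-weights, to controlling
\[ I^\epsilon(x) \;:=\; \int_0^\infty |\omega_t*\mu^\epsilon(x)|^2\, t^{d-1}\, dt. \]
By the $L^2$-identity of the abstract, this quantity equals $\int_0^\infty |\widehat{\omega_r}*\mu^\epsilon(x)|^2\, r^{d-1}\, dr$, which is precisely the form in which the spherical-average hypothesis can be exploited.

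Now integrate against $d\lambda(x)$ and swap orders of integration:
\[ \int I^\epsilon(x)\, d\lambda(x) \;=\; \int_0^\infty r^{d-1} \left(\int |\widehat{\omega_r}*\mu^\epsilon(x)|^2\, d\lambda(x)\right) dr. \]
The inner integral is a quadratic form in $\mu^\epsilon$ whose kernel is built from $\widehat{\omega_r}$ and $\lambda$; transferring to the Fourier side yields an integral pairing $|\widehat{\omega_r}(\xi)|^2$ with $\widehat{\mu^\epsilon}$ and $\widehat{\lambda}$. A dyadic decomposition in $|\xi|$, followed by Cauchy--Schwarz together with the spherical-average bound \eqref{Wolff-Erdogan} applied to $\lambda$ (yielding $\int_{S^{d-1}}|\widehat{\lambda}(r\omega)|^2 d\omega \lesssim r^{-\beta(s_\lambda)}$) and the Frostman decay of $\mu^\epsilon$, should give a finite total bound under exactly the hypothesis $s_\mu + \beta(s_\lambda) > d$.

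The main obstacle will be executing this Fourier-side estimate sharply: (i) arranging the dyadic decomposition so that both the spherical-average bound for $\lambda$ and the Frostman decay of $\mu$ enter with their best exponents, producing the Falconer-type threshold $s_\mu + \beta(s_\lambda) > d$ rather than a weaker relation; and (ii) passing safely to the limit $\epsilon \to 0$ to transfer the uniform bound on the mollified density to an actual $L^2$ density for $\nu_x$ itself. The decisive gain comes precisely from the $L^2$-identity: it recasts $I^\epsilon(x)$ as an integral of $|\widehat{\omega_r}*\mu^\epsilon|^2$ whose Fourier mass concentrates near spheres of radius $\sim r^{-1}$, coupling optimally to the spherical-average hypothesis, whereas naively bounding $I^\epsilon(x)$ by Frostman decay alone would only reproduce the Peres--Schlag threshold $\frac{d+1}{2}$.
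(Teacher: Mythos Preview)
Your strategy is essentially the paper's: choose Frostman measures $\mu$ on $E$ and $\lambda$ on the exceptional set, use the $L^2$-identity to rewrite $\int|\omega_t*\mu^\epsilon(x)|^2\,t^{d-1}dt$ as $\int|\widehat{\omega_r}*\mu^\epsilon(x)|^2\,r^{d-1}dr$, integrate against $\lambda$, and then combine the spherical-average estimate for $\lambda$ with the Frostman decay of $\mu$ to close the argument under $s_\mu+\beta(s_\lambda)>d$.

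The one place your sketch is fuzzy is the ``Fourier side'' step. The expression you write, an ``integral pairing $|\widehat{\omega_r}(\xi)|^2$ with $\widehat{\mu^\epsilon}$ and $\widehat{\lambda}$,'' does not arise naturally, and trying to apply the spherical-average bound directly to $\widehat{\lambda}$ together with a Cauchy--Schwarz/dyadic scheme is not the clean route. The paper instead observes that $\widehat{\omega_r}*\mu^\epsilon=(\widehat{\mu^\epsilon}\,d\omega_r)^\vee$, so the inner integral is precisely $\int|\widehat{g\,d\omega_r}|^2\,d\lambda$ with $g=\overline{\widehat{\mu^\epsilon}}$, and the \emph{adjoint} (extension) form of the Wolff--Erdo\~{g}an estimate (Lemma~\ref{lemma4}) gives this $\lesssim r^{-\beta(s_\lambda)+\epsilon}\|\widehat{\mu^\epsilon}\|_{L^2(\omega_r)}^2$ in one stroke. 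Integrating $r^{d-1}dr$ then yields the energy integral $\int|\widehat{\mu^\epsilon}(\xi)|^2|\xi|^{-\beta(s_\lambda)+\epsilon}d\xi$, and only at this point does the dyadic decomposition enter, via Lemma~\ref{lemma1}, to produce the threshold $s_\mu+\beta(s_\lambda)>d$. Once you reorganize your obstacle (i) this way, the estimate is sharp with no loss; your concern (ii) about the limit $\epsilon\to 0$ is routine.
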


The proof relies on an $L^2$-identity (see Section \ref{Sec-L^2-id}) and Wolff-Erdo\~{g}an's estimate (see Lemma \ref{lemma4}). Although Wolff-Erdo\~{g}an's estimate was originally used on the Falconer distance problem, this paper shows that it also helps on the pinned distance problem, where Mattila's integral \eqref{Mat} is replaced by a new integral (see \eqref{Mat-pin}).

\begin{remark}
	As we mentioned above in Remark \ref{Du's result}, the estimate \eqref{Wolff-Erdogan} has been improved by Du \emph{et al.} when $d\geq 3$ and later furthur improved by Du and Zhang when $d\geq 4$. Thus our dimensional exponent for the pinned distance problem has been improved to \eqref{best-known} as well.
\end{remark}
\subsection{Associated spherical means}\label{Sec-Sph-mean}
Let $\mu$ be a Frostman measure on $E\subset\R^d$ and $\omega_t$ be the normalized surface measure on $tS^{d-1}$. One can define a measure $\nu_x(t)$ on $\Delta_x(E)$ by
$$\int f(t)\, d\nu_x(t)=\int f(|x-y|)\,d\mu(y)=\lim_{\epsilon\rightarrow 0}\,c_d\int f(t)\left(\int_{S^{d-1}}\mu^\epsilon(x-t\omega)\,d\omega\right)\,t^{d-1}\,dt,$$
where $\mu^\epsilon\in C_0^\infty$, $\mu^\epsilon\rightarrow \mu$. Therefore as a distribution,
$$\nu_x(t)=\lim_{\epsilon\rightarrow 0}\,c_d\,t^{d-1}\,\omega_t*\mu^\epsilon(x).$$

To prove Theorem \ref{pin-dist}, a natural idea is to show that for any $F\subset\R^d$, $$\dH(F)>\begin{cases}\frac{3}{2}d+1-2\dH(E), &\dH(E)\in[\frac{d}{2}, \frac{d+1}{2}]\\
	d-\dH(E), &\dH(E)>\frac{d+1}{2}
\end{cases},$$
there must exist $x\in F$ such that the support of $\nu_x$ has positive Lebesgue measure. Thus it suffices to prove that there exists a measure $ \lambda$ on $F$ such that
$$\int \int_{t\approx 1} |\nu_x(t)|^2\,dt\,d \lambda(x) <\infty.$$
If it holds, the Radon-Nikodym derivatives $\frac{d\nu_x}{dt}\in L^2$ for $\lambda$-a.e. $x\in F$, which implies the support of $\nu_x$ has positive Lebesgue measure.
\vskip.125in
We shall prove a more general result. Define $$T_tf(x)=\omega_t*f(x),\ T^\mu_tf(x)=\omega_t*(f\,d\mu)(x),$$
$$||f||^2_{\dot{H}^s}=\int |\widehat{f}(\xi)|^2\,|\xi|^{2s}\,d\xi.  $$

\begin{theorem}\label{operator}Suppose $\lambda$ is a compactly supported measure satisfying \eqref{Frostmanmeasure}. Then
\begin{equation}\label{operator1}||T_t f||_{L^2(t^{d-1}dt\times d \lambda)}\lesssim_\epsilon ||f||_{\dot{H}^{-\frac{\beta(s_\lambda)}{2}+\epsilon}}.\end{equation}
In particular, if in addition $\mu$ satisfies \eqref{Frostmanmeasure} and $s_\mu+\beta(s_{ \lambda})>d$, then
\begin{equation}\label{operator2}||T^\mu_t f||_{L^2(t^{d-1}dt\times d \lambda)}\lesssim ||f||_{L^2{(\mu)}}.\end{equation}
\end{theorem}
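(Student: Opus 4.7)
The plan is to prove \eqref{operator1} by passing to the Fourier side via the $L^2$-identity of Section \ref{Sec-L^2-id} and then reducing the remaining estimate to the spherical average inequality \eqref{Wolff-Erdogan} applied to weighted measures. Applying the identity under the $\lambda$-integration gives
\[
\|T_tf\|^2_{L^2(t^{d-1}dt\times d\lambda)}=\int_0^\infty r^{d-1}\int|\widehat{\omega_r}*f(x)|^2\,d\lambda(x)\,dr,
\]
and a direct computation shows $\widehat{\omega_r}*f(x)=\int_{S^{d-1}}e^{2\pi ir\omega\cdot x}\hat f(r\omega)\,d\sigma(\omega)$. I would therefore introduce the extension-type operator
\[
P_r:L^2(\sigma)\to L^2(d\lambda),\qquad P_rg(x):=\int_{S^{d-1}}e^{2\pi ir\omega\cdot x}g(\omega)\,d\sigma(\omega),
\]
so that $\widehat{\omega_r}*f=P_r(\hat f(r\,\cdot))$ and consequently
\[
\int|\widehat{\omega_r}*f|^2\,d\lambda\le\|P_r\|^2_{L^2(\sigma)\to L^2(\lambda)}\int_{S^{d-1}}|\hat f(r\omega)|^2\,d\sigma(\omega).
\]

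To bound $\|P_r\|$ I would pass to the adjoint $P_r^*:L^2(d\lambda)\to L^2(\sigma)$; unwinding the pairing gives $P_r^*h(\omega)=\widehat{h\,d\lambda}(r\omega)$, so
\[
\|P_r^*h\|^2_{L^2(\sigma)}=\int_{S^{d-1}}|\widehat{h\,d\lambda}(r\omega)|^2\,d\sigma(\omega).
\]
Since the weighted measure $h\,d\lambda$ inherits a Riesz-type energy bound from the Frostman condition on $\lambda$ (its $s$-energy is controlled by $\|h\|^2_{L^2(\lambda)}$ times a Frostman constant, for any $s<s_\lambda$), Lemma \ref{lemma4}---the Wolff-Erdo\~gan spherical-average estimate formulated for such weighted measures---should yield $\|P_r^*h\|^2_{L^2(\sigma)}\lesssim_\epsilon r^{-\beta(s_\lambda)+\epsilon}\|h\|^2_{L^2(\lambda)}$, hence $\|P_r\|^2\lesssim_\epsilon r^{-\beta(s_\lambda)+\epsilon}$. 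Substituting and collapsing $r^{d-1}\,dr\,d\sigma$ back into $d\xi$ via polar coordinates on $\R^d$ then produces
\[
\|T_tf\|^2_{L^2(t^{d-1}dt\times d\lambda)}\lesssim_\epsilon\int|\hat f(\xi)|^2|\xi|^{-\beta(s_\lambda)+\epsilon}\,d\xi\sim\|f\|^2_{\dot H^{-\beta(s_\lambda)/2+\epsilon/2}},
\]
which is \eqref{operator1}.

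For \eqref{operator2} I would exploit $T_t^\mu f=T_t(f\,d\mu)$ and apply \eqref{operator1} with $f\,d\mu$ in place of $f$; it then suffices to show $\|f\,d\mu\|_{\dot H^{-s}}\lesssim\|f\|_{L^2(\mu)}$ for $s=\beta(s_\lambda)/2-\epsilon$. Via the Fourier identification $\|f\,d\mu\|^2_{\dot H^{-s}}=c_s\iint\frac{f(x)\overline{f(y)}}{|x-y|^{d-2s}}\,d\mu(x)d\mu(y)$, followed by the elementary symmetrization $|f(x)\overline{f(y)}|\le\tfrac12(|f(x)|^2+|f(y)|^2)$, the right-hand side collapses to $\sup_x\int|x-y|^{2s-d}\,d\mu(y)\cdot\|f\|^2_{L^2(\mu)}$; the supremum is finite via the Frostman bound on $\mu$ precisely when $d-2s<s_\mu$, i.e.\ when $s_\mu+\beta(s_\lambda)>d+2\epsilon$, which holds for small $\epsilon$ under the hypothesis.

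The hard part will be Lemma \ref{lemma4}: upgrading the spherical-average estimate \eqref{Wolff-Erdogan} from the Frostman measure $\lambda$ itself to the weighted measures $h\,d\lambda$ with $h\in L^2(\lambda)$. This should be tractable because the known proofs of \eqref{Wolff-Erdogan} access the measure only through its Riesz energy---and in the weighted setting that energy is controlled by $\|h\|^2_{L^2(\lambda)}$ times a Frostman constant---but it is the one genuinely new piece of harmonic analysis beyond the $L^2$-identity, and so deserves to be isolated as a separate lemma.
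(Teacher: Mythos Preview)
Your overall architecture for \eqref{operator1} matches the paper's: apply the $L^2$-identity, then bound the extension operator $P_r:L^2(\sigma)\to L^2(\lambda)$ by $r^{-\beta(s_\lambda)/2+\epsilon}$ and collapse back to polar coordinates. The one misstep is the detour through the adjoint $P_r^*$, which leads you to misread Lemma \ref{lemma4}. In the paper, Lemma \ref{lemma4} is exactly the \emph{forward} bound $\int|\widehat{g\,d\omega_r}|^2\,d\lambda\lesssim_\epsilon r^{-\beta(s_\lambda)+\epsilon}\|g\|_{L^2(\omega_r)}^2$, i.e.\ $\|P_r\|^2\lesssim r^{-\beta(s_\lambda)+\epsilon}$, and it is proved from the thickened-annulus formulation of Wolff--Erdo\~gan, using only the Frostman condition on $\lambda$. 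Your dual statement is of course equivalent, but your proposed route to it---applying the spherical-average bound directly to $h\,d\lambda$ because ``the known proofs access the measure only through its Riesz energy''---is shaky: Wolff's and Erdo\~gan's arguments use the ball condition $\lambda(B(x,r))\lesssim r^{s_\lambda}$, not merely finite energy, and $h\,d\lambda$ need not satisfy any Frostman bound for general $h\in L^2(\lambda)$. So the ``hard part'' you isolate is self-inflicted; drop the adjoint and invoke Lemma \ref{lemma4} on the forward side, exactly as the paper does.

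For \eqref{operator2} your physical-side argument (Riesz-kernel identity plus the pointwise bound $\sup_x\int|x-y|^{-(d-\beta(s_\lambda)+2\epsilon)}\,d\mu(y)<\infty$ from the Frostman condition on $\mu$) is a correct and slightly more elementary alternative to the paper's Fourier-side proof, which instead decomposes $\int|\widehat{f\,d\mu}(\xi)|^2|\xi|^{-\beta(s_\lambda)+\epsilon}\,d\xi$ dyadically and applies Lemma \ref{lemma1}. Both give the same threshold $s_\mu+\beta(s_\lambda)>d$.
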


As we explained above, Theorem \ref{pin-dist} follows from Theorem \ref{operator}. In fact the dimensional exponent in Theorem \ref{pin-dist} comes from solving $s_{ \lambda}$ from $s_\mu+\beta(s_{ \lambda})=d$.
\vskip.125in
A straightforward consequence of Theorem \ref{operator} is
\begin{equation}\label{int-chain}||T^\mu_{t_{k+1}}\circ\cdots\circ T^\mu_{t_1} f||_{L^2(t_1^{d-1}dt_1\times\cdots\times t_{k+1}^{d-1}dt_{k+1}\times d\mu)}\lesssim ||f||_{L^2(\mu)},\ \text{if}\ s_\mu+\beta(s_\mu)>d.\end{equation}
To see this, take $\lambda=\mu$, fix $t_1,\dots t_k$ and apply \eqref{operator2} to $T^\mu_{t_{k}}\circ\cdots\circ T^\mu_{t_1} f$. It follows that
$$||T^\mu_{t_{k+1}}\circ\cdots\circ T^\mu_{t_1} f||_{L^2(t_1^{d-1}dt_1\times\cdots\times t_{k+1}^{d-1}dt_{k+1}\times d\mu)}\lesssim ||T^\mu_{t_{k}}\circ\cdots\circ T^\mu_{t_1} f||_{L^2(t_1^{d-1}dt_1\times\cdots\times t_{k}^{d-1}dt_{k}\times d\mu)},$$
if $s_\mu+\beta(s_\mu)>d$, which implies \eqref{int-chain} by induction. The following geometric result then follows from \eqref{int-chain}.
\begin{corollary}
	Suppose $E\subset\R^d$, $d\geq 2$, $\dH(E)+\beta(\dH(E))>d$. Then for any $k\in\mathbb{Z}^+$, the $k$-chain set,
	$$\{(|x_1-x_2|,\dots,|x_{k}-x_{k+1}|): x_j\in E\}  $$
	has positive $k$-dimensional Lebesgue measure.
\end{corollary}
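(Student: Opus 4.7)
\medskip\noindent\textbf{Proof proposal.} The plan is to derive the corollary from \eqref{int-chain} by writing the $k$-chain set as the support of an absolutely continuous pushforward measure. Since $\beta$ is continuous, fix $s_\mu<\dH(E)$ with $s_\mu+\beta(s_\mu)>d$, and a Frostman probability measure $\mu$ on $E$ with $\mu(B(x,r))\lesssim r^{s_\mu}$. Let $\Phi:(x_1,\ldots,x_{k+1})\mapsto (|x_1-x_2|,\ldots,|x_k-x_{k+1}|)$ and $\nu := \Phi_*(\mu^{k+1})$. It suffices to show that $\nu$ is absolutely continuous with respect to Lebesgue measure on $(0,\infty)^k$, since the positive total mass $\mu(E)^{k+1}>0$ then forces the $k$-chain set to have positive $k$-dimensional Lebesgue measure.

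The key computation is an iterated polar-coordinate unfolding of $\int\phi\,d\nu$. Starting from
$$\int g(|x-y|)\,h(y)\,d\mu(y) = c_d\int g(t)\,T^\mu_t h(x)\,t^{d-1}\,dt$$
(valid after mollifying $\mu$ and $h$ and passing to the limit) and integrating successively over $x_1, x_2,\ldots,x_{k+1}$, one obtains
$$\int \phi\,d\nu = c_d^k\int \phi(t_1,\ldots,t_k)\,G(\vec t)\prod_{j=1}^k t_j^{d-1}\,dt_j,$$
where $G(\vec t) := \int T^\mu_{t_k}\circ\cdots\circ T^\mu_{t_1} 1(x)\,d\mu(x)$. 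This exhibits $\nu$ as having Lebesgue density $c_d^k\,G(\vec t)\prod t_j^{d-1}$ on $(0,\infty)^k$.

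It then remains to control $G$. Cauchy-Schwarz in the $x$-variable gives
$$G(\vec t)^2 \le \mu(\mathbb{R}^d)\int \bigl|T^\mu_{t_k}\circ\cdots\circ T^\mu_{t_1} 1(x)\bigr|^2\,d\mu(x),$$
and applying \eqref{int-chain} with $f\equiv 1\in L^2(\mu)$ (and with the index $k+1$ there replaced by $k$) yields $\int G(\vec t)^2\prod t_j^{d-1}\,dt_j \lesssim \mu(\mathbb{R}^d)^2 < \infty$. On any compact subset of $(0,\infty)^k$ the weight $\prod t_j^{d-1}$ is bounded above and below, so $d\nu/d\mathrm{Leb}$ is locally $L^2$, hence locally $L^1$. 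Combined with $\nu((0,\infty)^k) > 0$ (which holds because $E$ contains pairs at positive distance), this furnishes a set of positive Lebesgue measure on which the density is strictly positive, and this set lies in the $k$-chain set.

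The main obstacle will be the iterated polar-decomposition identity for $d\nu$: making it rigorous requires mollifying $\mu$, unfolding the $x_j$-integrals one at a time via Fubini, and justifying the limit using the $L^2$-bound already supplied by \eqref{int-chain}. Once this explicit form of $d\nu$ is in hand, the rest of the argument reduces to one application of Cauchy-Schwarz plus a single invocation of \eqref{int-chain}.
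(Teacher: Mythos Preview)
Your proposal is correct and follows exactly the route the paper intends: the paper simply asserts that the corollary ``follows from \eqref{int-chain}'' without writing out the details, and what you have supplied---pushing $\mu^{k+1}$ forward by $\Phi$, unfolding the integrals to identify the density as $c_d^k\,G(\vec t)\prod t_j^{d-1}$ with $G(\vec t)=\int T^\mu_{t_k}\circ\cdots\circ T^\mu_{t_1}1\,d\mu$, then bounding $\|G\|_{L^2(\prod t_j^{d-1}dt_j)}$ via Cauchy--Schwarz and \eqref{int-chain}---is precisely the standard mechanism (compare the single-distance case sketched in Section~\ref{Sec-Sph-mean}). The mollification/limit step you flag as the main obstacle is indeed the only technical point requiring care, and it goes through by the usual weak-$L^2$ compactness argument once the uniform bound from \eqref{int-chain} is in hand.
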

By the results of Wolff, Du \emph{et al.} and Du-Zhang, this corollary holds whenever \eqref{best-known} holds. This improves results in \cite{BIT16}, where $s_\mu>\frac{d+1}{2}$ is obtained, and results in \cite{Liu17chains}, where only $k=2$ is considered.

\subsection{An $L^2$-identity and weighted Strichartz estimates}\label{Sec-L^2-id}
The key new ingredient in this paper is the following $L^2$-identity. Denote $d\omega_r$ as the normalized surface measure on $rS^{d-1}$. Also denote $d\omega=d\omega_1$.
\begin{theorem}
	\label{identity}
	For any Schwartz function $f$ on $\R^d$, $d\geq 2$ and any $x\in\R^d$,
$$\int_0^\infty |\omega_t*f(x)|^2\,t^{d-1}dt\,=\int_0^\infty|\widehat{\omega_r}*f(x)|^2\,r^{d-1}dr.$$
\end{theorem}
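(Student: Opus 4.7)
The plan is to exhibit both sides of the identity as $L^2$-norms of a single radial function and its Fourier transform respectively, and then invoke Plancherel. Fix $x\in\R^d$ and define $F\colon\R^d\to\C$ by $F(y):=\omega_{|y|}*f(x)$. Because the normalized measure $\omega_{|y|}$ on the sphere of radius $|y|$ is exactly the pushforward of the normalized Haar measure on $SO(d)$ under $R\mapsto Ry$, the function $F$ is radial and admits the group-average representation
$$F(y)=\int_{SO(d)} f(x-Ry)\,dR.$$

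Next I would use that the Fourier transform intertwines rotations: $(g\circ R)^{\wedge}(\xi)=\widehat g(R\xi)$ for every $R\in SO(d)$. Applying this to $g(y):=f(x-y)$, whose Fourier transform is $\widehat g(\eta)=e^{-2\pi i x\cdot\eta}\widehat f(-\eta)$, and interchanging the Fourier transform with the Haar average (which is licit for Schwartz $f$), I obtain
$$\widehat F(\xi)=\int_{SO(d)}e^{-2\pi i x\cdot R\xi}\,\widehat f(-R\xi)\,dR=\int_{S^{d-1}}e^{2\pi i |\xi|x\cdot\theta}\,\widehat f(|\xi|\theta)\,d\theta,$$
where the second equality rewrites the Haar average as a spherical average and uses the reflection symmetry $\theta\leftrightarrow-\theta$. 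A direct Fourier-side check, based on the distributional identity $(\widehat{\omega_r})^{\wedge}=\omega_r$ (which yields $(\widehat{\omega_r}*f)^{\wedge}=\widehat f\cdot\omega_r$), shows that the last displayed expression equals $\widehat{\omega_{|\xi|}}*f(x)$. Hence $\widehat F$ is also radial and coincides pointwise with $\xi\mapsto\widehat{\omega_{|\xi|}}*f(x)$.

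With $F$ and $\widehat F$ both radial, Plancherel gives $\int_{\R^d}|F(y)|^2\,dy=\int_{\R^d}|\widehat F(\xi)|^2\,d\xi$, and converting each side to polar coordinates produces identical $|S^{d-1}|$ factors that cancel, leaving the claimed identity. I expect no substantive obstacle: the entire content of the identity is concentrated in the step $\widehat F(\xi)=\widehat{\omega_{|\xi|}}*f(x)$, which follows directly from the rotation-invariance of the Fourier transform applied to the group-average formula for $F$. The only care required is the bookkeeping of signs (the reflection $\theta\leftrightarrow-\theta$) and a consistent normalization of $\omega$ on both sides so that no stray constant remains.
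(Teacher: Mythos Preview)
Your proposal is correct and follows essentially the same approach as the paper: both arguments rest on the group-action representation $\omega_{|y|}*f(x)=\int_{SO(d)}f(x-Ry)\,dR$ together with a single application of Plancherel. The only difference is packaging: the paper expands $|\omega_t*f(x)|^2$ as a product of a spherical integral and an $O(d)$-integral before passing to Cartesian coordinates and applying Plancherel in $y$, whereas you recognize the left side directly as $|S^{d-1}|^{-1}\|F\|_{L^2(\R^d)}^2$ for the radial function $F(y)=\omega_{|y|}*f(x)$ and identify $\widehat F(\xi)=\widehat{\omega_{|\xi|}}*f(x)$ --- the computations unfold identically.
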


This identity links the spherical mean value operator (on $f$) and the extension operator (on $\widehat{f}$), where restriction estimates apply. Moreover, the right hand side equals, by Plancherel,
\begin{equation*}\begin{aligned}\int\left|\int  e^{-2\pi i tr}\left(\widehat{f}\,d\omega_r\right)^\vee(x)\,r^{\frac{d-1}{2}}dr\right|^2\,dt= &\int\left|\iint  e^{-2\pi i tr}\,e^{2\pi i x\cdot r\omega}\widehat{f}(r\omega)\,d\omega\,r^{\frac{d-1}{2}}dr\right|^2\,dt\\=&c_d\int\left|\iint  e^{-2\pi i t|\xi|}\,e^{2\pi i x\cdot \xi}\widehat{f}(\xi)\,|\xi|^{-\frac{d-1}{2}}d\xi\right|^2\,dt\\=&c_d\int\left|D_x^{-\frac{d-1}{2}}e^{-2\pi i t\sqrt{-\Delta}}f(x)\right|^2\,dt,
\end{aligned}
\end{equation*}
where $(\,)^\vee$ denotes the inverse Fourier transform, $D_x^\alpha=(-\Delta)^{\frac{\alpha}{2}}$ and $\Delta$ is the standard Laplacian. Similarly, with $r'=r^2$, it follows that
$$\int|\widehat{\omega_r}*f(x)|^2\,r^{d-1}dr= \frac{1}{2}\int|\widehat{\omega_{\sqrt{r'}}}*f(x)|^2\,(r')^{\frac{d-2}{2}}dr' = c_d'\int\left|D_x^{-\frac{d-2}{2}}e^{2\pi i t\Delta}f(x)\right|^2\,dt. $$

Therefore, the norm $||T_t f||^2_{L^2(t^{d-1}dt\times d \lambda)}$ in Theorem \ref{operator} is, in fact,
\begin{equation}\label{Mat-pin}
	\begin{aligned}
		\iint |\omega_t*f(x)|^2\,t^{d-1}dt\,d\lambda(x)=&\iint |\widehat{\omega_r}*f(x)|^2\,r^{d-1}dr\,d\lambda(x) \\=&c_d\iint\left|D_x^{-\frac{d-1}{2}}e^{-2\pi i t\sqrt{-\Delta}}f(x)\right|^2\,dt\,d\lambda(x)\\=&c_d'\iint\left|D_x^{-\frac{d-2}{2}}e^{2\pi i t\Delta}f(x)\right|^2\,dt\,d\lambda(x).
	\end{aligned}
\end{equation}
In other words, we reduce the pinned distance problem to weighted $L^2$-estimates for the wave (or Schr\"{o}dinger) operator. This kind of estimates was first studied by Ruiz and Vega in \cite{RV94}, where they investigate perturbations of the free equation by time-dependent potentials. More precisely they consider 
$$\iint\left|e^{-2\pi i t\sqrt{-\Delta}}f(x)\right|^2\,V(t,x)\,dt\,dx,\ \ \iint\left|e^{2\pi i t\Delta}f(x)\right|^2\,V(t,x)\,dt\,dx,  $$
where $\sup_{t>0} V\in \mathcal{L}^{\alpha,p}$, the Morrey-Campanato classes, defined by
$$||w||_{\mathcal{L}^{\alpha,p}}=\sup_{r, x_0}r^{\alpha}\left(r^{-d}\int_{B(x_0,r)}|w(x)|^p\,dx\right)^\frac{1}{p}<\infty.  $$
One can also see \cite{BBCRV08}, \cite{BBCRV10}, \cite{Korean16} for related work. An explicit weight, $|x|^s\,dx\,dt$, is discussed in \cite{HMSSZ10} (see (2.6) there). Although, unfortunately, none of their results helps in the distance problem, it is interesting to see this connection between geometric measure theory and PDE.

Another remark is, the identity in Theorem \ref{identity} is related to Kaneko-Sunouchi's work in 1985 (\cite{KS85}), where they show pointwise equivalence between square functions generated by generalized spherical means and Bochner-Riesz means. The author would like to thank Anthony Carbery to point it out.

\vskip.125in
{\bf Notation.} $X\lesssim Y$ means $X\leq CY$ for some constant $C>0$. $X\lesssim_\epsilon Y$ means $X\leq C_\epsilon Y$ for some constant $C_\epsilon>0$, depending on $\epsilon$.

For any set $A\subset\R^d$, $|A|$ denotes its $d$-dimensional Lebesgue measure.

Denote $d\omega_r$ as the normalized surface measure on $rS^{d-1}$. Also denote $d\omega=d\omega_1$.

$\widehat{f}(\xi):=\int e^{-2\pi i x\cdot\xi} f(x)\,dx$ is the Fourier transform and $f^\vee(\xi):=\int e^{2\pi i x\cdot\xi} f(x)\,dx$ is the inverse Fourier transform.

$||f||^2_{\dot{H}^s}:=\int |\widehat{f}(\xi)|^2\,|\xi|^{2s}\,d\xi$.

Denote $\Delta$ as the standard Laplacian and $D_x^\alpha=(-\Delta)^{\frac{\alpha}{2}}$.

\section{Proof of Theorem \ref{identity}}
The proof relies on a group action argument. The idea of using group action argument to attack distance problem dates back to the solution to the Erd\H{o}s distance problem (\cite{ES11}, \cite{GK15}). On Falconer distance problem, authors in \cite{GILP15} observed that Mattila's integral \eqref{Mat} can be interpreted in terms of Haar measures on $O(d)$, that is,
$$\int {\left( \int_{S^{d-1}} {|\widehat{\mu}(r \omega)|}^2 d\omega \right)}^2 r^{d-1} dr=\int {|\widehat{\mu}(\xi)|}^2\left( \int_{O(d)}{|\widehat{\mu}(\theta \xi)|}^2 d\theta \right)\,d\xi.  $$
In this paper we follow the idea in \cite{Liu17}, where an alternative derivation of Mattila's integral \eqref{Mat} is given (see Appendix). Similar reduction can also be found in \cite{Liu17chains}.
\vskip.125in
We may assume $f$ is real. Denote $d\theta$ as the Haar measure on $O(d)$, the orthogonal group. By the invariance of the Haar measure, we can write
$$T_t f (x)= \omega_t*f(x) = \int_{S^{d-1}} f(x-t\omega)\,d\omega = \int_{O(d)} f(x-t\theta \omega_0)\,d\theta, $$
where $\omega_0\in S^{d-1}$ is arbitrary but fixed. Then 
\begin{equation*}
\begin{aligned}
\int |T_t f(x)|^2\,t^{d-1}dt=&\int\left(\int_{S^{d-1}} f(x-t\omega)\,d\omega\right)\left(\int_{O(d)} f(x-t\theta \omega_0)\,d\theta\right)t^{d-1}dt\\=&\int\int_{S^{d-1}} f(x-t\omega)\left(\int_{O(d)} f(x-t\theta \omega_0)\,d\theta\right)\,d\omega\,t^{d-1}dt.
\end{aligned}
\end{equation*}
By the invariance of the Haar measure, we may replace $\omega_0$ by $\omega$. By polar coordinates $y=t\omega$, we have $d\omega\,t^{d-1}dt=\frac{1}{|S^{d-1}|}\, dy$. It follows that

\begin{equation*}
\begin{aligned}
&\int |T_t f(x)|^2\,t^{d-1}dt\\=&\frac{1}{|S^{d-1}|}\int f(x-y)\left(\int_{O(d)} f(x-\theta y)\,d\theta\right)\,dy\\=&\frac{1}{|S^{d-1}|}\int_{O(d)} \left(\int f(x-y)\, f(x-\theta y)\,dy\right)\,d\theta\\=&\frac{1}{|S^{d-1}|}\int_{O(d)} \left(\int \widehat{f}(-\xi)e^{-2\pi i x\cdot\xi}\ \widehat{f}(\theta\xi)e^{2\pi i x\cdot\theta\xi}\,d\xi\right)\,d\theta\\=&\int \left(\int_{S^{d-1}}\int_{S^{d-1}} \widehat{f}(-r\omega)e^{-2\pi i x\cdot r\omega}\ \widehat{f}(r\omega')e^{2\pi i x\cdot r\omega'}\,d\omega\,d\omega'\right)\,r^{d-1}\,dr\\=&\int|\widehat{\overline{\widehat{f\,}}d\omega_r}(x)|^2\,r^{d-1}dr\\=&\int|f*\widehat{\omega_r}(x)|^2\,r^{d-1}dr,\end{aligned}\end{equation*}
as desired.

\section{Some lemmas on Frostman measures}
\begin{lemma}[\cite{BIT16}, Lemma 2.5]\label{lemma1}
Suppose $\mu$ satisfies \eqref{Frostmanmeasure}. Then
	$$\int_{|\xi|\leq R}|\widehat{f\,d\mu}(\xi)|^2\,d\xi\lesssim R^{d-s_\mu}||f||_{L^2(\mu)}^2. $$
\end{lemma}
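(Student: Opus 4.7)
The plan is the standard uncertainty-principle argument: replace the sharp cutoff $\mathbf{1}_{B(0,R)}(\xi)$ by a smooth majorant whose inverse Fourier transform is compactly supported at scale $1/R$, then unfold the resulting quadratic form and apply the Frostman ball condition for $\mu$.

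First, I would fix once and for all a non-negative radial Schwartz function $\phi$ with $\phi \geq \mathbf{1}_{B(0,1)}$ such that $\widehat{\phi}$ is non-negative and supported in $B(0,1)$; a standard construction is $\phi = c\,|\widehat{\rho}|^2$ for a suitable radial bump $\rho$. Rescaling, $\phi_R(\xi) := \phi(\xi/R)$ satisfies $\phi_R \geq \mathbf{1}_{B(0,R)}$, while $\widehat{\phi_R}(x) = R^d\,\widehat{\phi}(Rx)$ is non-negative, bounded by $\lesssim R^d$, and supported in $B(0,1/R)$. Hence
\[
\int_{|\xi|\leq R} |\widehat{f\,d\mu}(\xi)|^2\,d\xi \ \leq\ \int \phi_R(\xi)\,|\widehat{f\,d\mu}(\xi)|^2\,d\xi.
\]

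Second, I would expand the right-hand side by writing out $\widehat{f\,d\mu}$ and applying Fubini, which yields
\[
\int \phi_R(\xi)\,|\widehat{f\,d\mu}(\xi)|^2\,d\xi \ =\ \iint f(x)\,\overline{f(y)}\,\widehat{\phi_R}(x-y)\,d\mu(x)\,d\mu(y).
\]
Since $\widehat{\phi_R}\geq 0$, the inequality $|f(x)\overline{f(y)}|\leq \tfrac{1}{2}(|f(x)|^2+|f(y)|^2)$ together with the symmetry of the kernel in $(x,y)$ reduces the problem to bounding
\[
\int |f(x)|^2 \left( \int \widehat{\phi_R}(x-y)\,d\mu(y) \right) d\mu(x).
\]

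Third, I would bound the inner integral using the support property of $\widehat{\phi_R}$ and the Frostman condition \eqref{Frostmanmeasure}. Because $\widehat{\phi_R}$ is supported in $B(0,1/R)$ and $\|\widehat{\phi_R}\|_\infty \lesssim R^d$, the inner integral is at most $\lesssim R^d\,\mu(B(x,1/R))\lesssim R^d\cdot R^{-s_\mu}=R^{d-s_\mu}$, uniformly in $x$. Plugging this in gives the claimed $R^{d-s_\mu}\|f\|_{L^2(\mu)}^2$ bound. I do not anticipate any serious obstacle; the only mild point of care is producing a majorant with both non-negative Fourier transform and compact Fourier support, which is routine.
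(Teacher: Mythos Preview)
Your proposal is correct and follows essentially the same approach as the paper: majorize the sharp ball cutoff by a smooth bump whose (inverse) Fourier transform is compactly supported at scale $1/R$, unfold the resulting bilinear form in $\mu\times\mu$, and bound the kernel integral via the Frostman condition. The only cosmetic difference is that the paper finishes with Schur's test while you use the inequality $|f(x)\overline{f(y)}|\leq\tfrac12(|f(x)|^2+|f(y)|^2)$, which is equivalent here.
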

We give the proof below for the sake of completeness.
\begin{proof}
Take $\psi\in C_0^\infty(\R^d)$ whose Fourier transform is positive on the unit ball. Then
\begin{equation*}
\begin{aligned}
	\int_{|\xi|\leq R}|\widehat{f\,d\mu}(\xi)|^2\,d\xi&\lesssim \int|\widehat{f\,d\mu}(\xi)|^2\,\widehat{\psi}(\frac{\xi}{R})\,d\xi\\& = R^d\iint \psi(R(x-y))\,f(x)f(y)d\mu(x)\,d\mu(y).
\end{aligned}	
\end{equation*}
Since $\psi$ has bounded support and $\mu$ satisfies \eqref{Frostmanmeasure},
$$\int|\psi(R(x-y))|\,d\mu(x)\lesssim R^{-s_\mu},\ \int|\psi(R(x-y))|\,d\mu(y)\lesssim R^{-s_\mu}.$$
Then the lemma follows by Shur's test.
\end{proof}
\begin{lemma}
	\label{lemma4}
	Suppose $\lambda$ is a compactly supported measure satisfying \eqref{Frostmanmeasure}. Then
$$\int |\widehat{g\,d\omega_R}|^2\,d\lambda\lesssim_\epsilon R^{-\beta(s_\lambda)+\epsilon} ||g||^2_{L^2(\omega_R)}. $$
\end{lemma}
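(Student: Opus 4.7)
My plan is to recast Lemma~\ref{lemma4} as an operator bound via a $TT^*$ duality, and then derive it from the spherical-average estimate \eqref{Wolff-Erdogan}. Expanding the square and applying Fubini,
\begin{equation*}
\int |\widehat{g\,d\omega_R}(x)|^2\,d\lambda(x) = \iint \widehat\lambda(\xi-\eta)\,g(\xi)\,\overline{g(\eta)}\,d\omega_R(\xi)\,d\omega_R(\eta).
\end{equation*}
Introducing the operator $T:L^2(\lambda)\to L^2(\omega_R)$ by $Th=\widehat{h\,d\lambda}\big|_{RS^{d-1}}$, the left-hand side equals $\|T^*g\|_{L^2(\lambda)}^2$, so the lemma is equivalent to the dual bound
\begin{equation*}
\int_{S^{d-1}}|\widehat{h\,d\lambda}(R\omega)|^2\,d\omega \;\lesssim_\epsilon\; R^{-\beta(s_\lambda)+\epsilon}\,\|h\|_{L^2(\lambda)}^2 \qquad(\star)
\end{equation*}
for every $h\in L^2(\lambda)$. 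Note that the case $h\equiv 1$ of $(\star)$ is precisely \eqref{Wolff-Erdogan} applied to $\lambda$, so $(\star)$ is the natural operator strengthening of \eqref{Wolff-Erdogan}.

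For $h\in L^\infty(\lambda)$ the signed measure $h\,d\lambda$ still satisfies \eqref{Frostmanmeasure} with constant $\|h\|_{L^\infty(\lambda)}\cdot C_\lambda$, so plugging it into \eqref{Wolff-Erdogan} (separately for its positive and negative parts) immediately yields $(\star)$ but with $\|h\|_{L^\infty(\lambda)}^2$ rather than $\|h\|_{L^2(\lambda)}^2$ on the right. To descend from $L^\infty$ to $L^2$, I would decompose $h=\sum_n h_n$ dyadically in the level sets of $|h|$, so that $\|h_n\|_{L^\infty(\lambda)}\lesssim 2^n$ while the support satisfies $\lambda_n:=\lambda(\operatorname{supp}(h_n))\le 2^{-2n}\|h\|_{L^2(\lambda)}^2$ by Chebyshev. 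For each piece one then has both the Wolff--Erdo\~{g}an bound $\|\widehat{h_n\,d\lambda}\|_{L^2(\omega_R)}^2\lesssim 2^{2n}R^{-\beta(s_\lambda)+\epsilon}$ and the trivial total-variation bound $\|\widehat{h_n\,d\lambda}\|_{\infty}^2\lesssim 2^{2n}\lambda_n^2$, and one combines the two by balancing them at the critical index $n^\ast$ where they coincide.

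The main obstacle is this final summation. A direct triangle inequality on the left of $(\star)$ brings in a divergent $\sum 2^n$, and even an optimal balancing of the two per-piece bounds above only recovers $R^{-\beta(s_\lambda)/2+\epsilon}\|h\|_{L^2(\lambda)}^2$, losing a factor of $2$ in the exponent. Recovering the sharp $R^{-\beta(s_\lambda)+\epsilon}$ appears to require the stronger operator form of Wolff--Erdo\~{g}an that is implicit in the bilinear-restriction proofs of \cite{Wol99,Erd05}; in effect those proofs already establish $(\star)$, and not merely the spherical average \eqref{Wolff-Erdogan}, so the honest proof of Lemma~\ref{lemma4} is best phrased as an extraction of this operator bound from their arguments, after the $T^*T$ reduction above has identified it as the correct dual statement.
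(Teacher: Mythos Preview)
Your $TT^*$ reduction to the dual statement $(\star)$ is correct and is essentially equivalent to what the paper does. The gap is in how you then try to establish $(\star)$. The dyadic level-set decomposition is, as you yourself note, a dead end: it can never recover the sharp exponent from the $h\equiv 1$ case alone, because \eqref{Wolff-Erdogan} as stated records only the value of a quadratic form on constant functions, not its operator norm. Your concluding sentence --- that $(\star)$ should be ``extracted'' from the bilinear-restriction arguments of Wolff and Erdo\~{g}an --- is not a proof but a pointer, and it also mischaracterizes the situation.

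The point you are missing is that no extraction is needed: the operator form is \emph{literally} the statement Wolff and Erdo\~{g}an prove. Their theorem is that for any $h$ supported on the $1$-neighborhood $A_R$ of $RS^{d-1}$,
\[
\int|\widehat{h}|^2\,d\lambda\;\lesssim_\epsilon\;R^{d-1-\beta(s_\lambda)+\epsilon}\,\|h\|_{L^2(A_R)}^2,
\]
and the spherical-average bound \eqref{Wolff-Erdogan} is \emph{deduced from} this, not the other way around (see e.g.\ Mattila's book, Chapter~16). By duality this thickened estimate is exactly your $(\star)$ up to the trivial passage between $RS^{d-1}$ and $A_R$. The paper then finishes with a two-line mollification: pick $\phi\in C_0^\infty$ with $|\widehat{\phi}|\ge 1$ on $\operatorname{supp}\lambda$, apply the displayed estimate to $h=(g\,d\omega_R)*\phi\in L^2(A_R)$, and check by Schur that $\|(g\,d\omega_R)*\phi\|_{L^2(A_R)}^2\lesssim R^{-(d-1)}\|g\|_{L^2(\omega_R)}^2$. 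That is the entire argument; once you know the correct input from \cite{Wol99,Erd05}, there is nothing left to balance or sum.
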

\begin{proof}
Denote $A_R$ as the $1$-neighborhood of $RS^{d-1}$. In Wolff's and Erdo\~{g}an's proof of \eqref{Wolff-Erdogan}, what was proved is, for any $h$ supported on $A_R$,
\begin{equation}\label{Wolff-Erdogan1}\int |\widehat{h}|^2\,d\lambda\lesssim_\epsilon R^{d-1-\beta(s_\lambda)+\epsilon}||h||_{L^2(A_R)}^2.
\end{equation}
One can see, e.g. \cite{Mat15}, Chapter 16 for the reduction. 

In our case, since $\lambda$ has compact support, one can find $\phi\in C_0^\infty$ such that $|\widehat{\phi}|\geq 1$ on the support of $\lambda$. Then $(g\,d\omega_R)*\phi$ is smooth on $A_R$. Therefore by \eqref{Wolff-Erdogan1},
$$\int |\widehat{g\,d\omega_R}|^2d\lambda\leq \int |\widehat{g\,d\omega_R}|^2|\widehat{\phi}|^2d\lambda=\int |\widehat{(g\,d\omega_R)*\phi}|^2d\lambda\lesssim_\epsilon R^{d-1-\beta(s_\lambda)+\epsilon}\int_{A_R}|(g\,d\omega_R)*\phi|^2.$$
Since $\phi$ has compact support and $\omega_R$ is the normalized surface measure on $RS^{d-1}$,
\begin{equation*}\begin{aligned}&\int_{A_R} |(g\,d\omega_R)*\phi|^2 \\\lesssim &\int_{A_R}\left(\int_{RS^{d-1}}|\phi(x-y)||g(y)|^2\,d\omega_R(y) \right)\left(\int_{RS^{d-1}}|\phi(x-y)|\,d\omega_R(y)\right)dx\\\lesssim &R^{-d+1} \int_{A_R}\int_{RS^{d-1}}|\phi(x-y)||g(y)|^2\,d\omega_R(y)dx\\\lesssim& R^{-d+1}||g||^2_{L^2(\omega_R)},\end{aligned}\end{equation*}
as desired.
\end{proof}

\section{Proof of Theorem \ref{operator}}
By Theorem \ref{identity},
$$\iint |T_t f(x)|^2\,t^{d-1}dt\,d\lambda(x)=\iint|\widehat{\overline{\widehat{f\,}}d\omega_r}(x)|^2\,d\lambda(x)\,r^{d-1}dr.$$
Then by Lemma \ref{lemma4}, it is bounded above by
$$\int\int_{rS^{d-1}} |\widehat{f}|^2\,d\omega_r\,r^{-\beta(s_{ \lambda})+\epsilon}\,r^{d-1}\,dr= c_d\int|\widehat{f}(\xi)|^2\,|\xi|^{-\beta(s_{ \lambda})+\epsilon}d\xi,$$
which completes the proof of \eqref{operator1} in Theorem \ref{operator}. For \eqref{operator2}, it suffices to show, when $s_\mu+\beta(s_{ \lambda})>d$, 
$$\int|\widehat{f\,d\mu}(\xi)|^2\,|\xi|^{-\beta(s_{ \lambda})+\epsilon}d\xi\lesssim ||f||_{L^2(\mu)}^2.$$
To see this, by Lemma \ref{lemma1},
\begin{equation*}
\begin{aligned}\int|\widehat{f\,d\mu}(\xi)|^2\,|\xi|^{-\beta(s_{ \lambda})+\epsilon}d\xi\lesssim &\ 2^{j(-\beta(s_{ \lambda})+\epsilon)}\int_{|\xi|\approx 2^j}|\widehat{f\,d\mu}(\xi)|^2\,d\xi\\\lesssim&\sum_j 2^{j(-\beta(s_{ \lambda})+d-s_\mu+\epsilon)}||f||_{L^2(\mu)}^2,
\end{aligned}
\end{equation*}
which is $\lesssim ||f||_{L^2(\mu)}^2$ if $s_\mu+\beta(s_{ \lambda})>d$, as desired.
\vskip.25in
\appendix
\begin{center}
    {\bf APPENDIX: An alternative derivation of Mattila's integral \eqref{Mat} via group actions}
  \end{center}
We only sketch the proof. One can see \cite{Liu17} for details.
\vskip.125in
Denote $E(d)$ as the group of rigid motions, with Haar measure $dg$, and $\sigma_t$ as the normalized surface measure on $\{(x,y)\in\R^d\times\R^d:|x-y|=t\}$. Roughly speaking there are two ways to define a measure $\nu$ on the distance set $\Delta(E)$,
$$\nu(t)=\int_{|x-y|=t}\mu(x)\,\mu(y)\,d\sigma_t(x,y)=\int_{E(d)}\mu(gx_t)\,\mu(gy_t)\,dg,  $$
where $|x_t-y_t|=t$, arbitrary but fixed. Therefore for any $J(t)>0$,
$$\int |\nu(t)|^2\,J(t)dt= \int \left(\int_{|x-y|=t}\mu(x)\,\mu(y)\left(\int_{E(d)}\mu(gx_t)\,\mu(gy_t)\,dg\right)d\sigma_t(x,y) \right)J(t)dt $$

By the invariance of the Haar measure, we can take $x_t=x$, $y_t=y$. Choose $J(t)>0$ such that $d\sigma_t(x,y)\,J(t)dt=dx\,dy$. Now the integral equals
$$\int_{E(d)}\left|\int\mu(x)\mu(gx)\,dx\right|^2\,dg=\int_{O(d)}\int_{\R^d}\left|\int\mu(x)\mu(\theta x+z)\,dx\right|^2\,dz\,d\theta.$$
By Plancherel in $x$, it equals
$$\int_{O(d)}\int_{\R^d}\left|\int_{\R^d} \widehat{\mu}(\xi)\,\widehat{\mu}(\theta \xi)\,e^{2\pi i z\cdot\xi}\,d\xi\right|^{2}\,dz\,d\theta.$$ 
By Plancherel in $z$, it equals
$$\int_{O(d)} \left(\int_{\R^d}|\widehat{\mu}(\xi)|^2\,|\widehat{\mu}(\theta\xi)|^2\,d\xi\right)\,d\theta=c_d\int {\left( \int_{S^{d-1}} {|\widehat{\mu}(r \omega)|}^2 d\omega \right)}^2 r^{d-1} dr,$$
as desired.
\bibliographystyle{abbrv}
\bibliography{/Users/MacPro/Dropbox/Academic/paper/mybibtex.bib}

\begin{thebibliography}{10}

\bibitem{BBCRV08}
J.~A. Barcel\'o, J.~M. Bennett, A.~Carbery, A.~Ruiz, and M.~C. Vilela.
\newblock A note on weighted estimates for the {S}chr\"odinger operator.
\newblock {\em Rev. Mat. Complut.}, 21(2):481--488, 2008.

\bibitem{BBCRV10}
J.~A. Barcel\'o, J.~M. Bennett, A.~Carbery, A.~Ruiz, and M.~C. Vilela.
\newblock Strichartz inequalities with weights in {M}orrey-{C}ampanato classes.
\newblock {\em Collect. Math.}, 61(1):49--56, 2010.

\bibitem{BIT16}
M.~Bennett, A.~Iosevich, and K.~Taylor.
\newblock Finite chains inside thin subsets of {$\Bbb{R}^d$}.
\newblock {\em Anal. PDE}, 9(3):597--614, 2016.

\bibitem{Bou94}
J.~Bourgain.
\newblock Hausdorff dimension and distance sets.
\newblock {\em Israel J. Math.}, 87(1-3):193--201, 1994.

\bibitem{Bou03}
J.~Bourgain.
\newblock On the {E}rd{\H o}s-{V}olkmann and {K}atz-{T}ao ring conjectures.
\newblock {\em Geom. Funct. Anal.}, 13(2):334--365, 2003.

\bibitem{DZ18}
X.~Du and R.~Zhang.
\newblock Sharp {$L^2$} estimate of {S}chr\"{o}dinger maximal function in
  higher dimensions.
\newblock {\em https://arxiv.org/abs/1805.02775}, 2018.

\bibitem{DGOWWZ18}
X.~D. Du, L.~Guth, Y.~Ou, H.~Wang, B.~Wilson, and R.~Zhang.
\newblock Weighted restriction estimates and application to falconer distance
  set problem.
\newblock {\em https://arxiv.org/abs/1802.10186}, 2018.

\bibitem{ES11}
G.~Elekes and M.~Sharir.
\newblock Incidences in three dimensions and distinct distances in the plane.
\newblock {\em Combinatorics, Probability and Computing}, 20(4):571--608, 2011.

\bibitem{Erd05}
M.~B. Erdo\~{g}an.
\newblock A bilinear {F}ourier extension theorem and applications to the
  distance set problem.
\newblock {\em Int. Math. Res. Not.}, (23):1411--1425, 2005.

\bibitem{Fal85}
K.~J. Falconer.
\newblock On the {H}ausdorff dimensions of distance sets.
\newblock {\em Mathematika}, 32(2):206--212, 1985.

\bibitem{GILP15}
A.~Greenleaf, A.~Iosevich, B.~Liu, and E.~Palsson.
\newblock A group-theoretic viewpoint on {E}rd{\H o}s-{F}alconer problems and
  the {M}attila integral.
\newblock {\em Rev. Mat. Iberoam.}, 31(3):799--810, 2015.

\bibitem{GK15}
L.~Guth and N.~H. Katz.
\newblock On the {E}rd{\H o}s distinct distances problem in the plane.
\newblock {\em Ann. of Math. (2)}, 181(1):155--190, 2015.

\bibitem{HMSSZ10}
K.~Hidano, J.~Metcalfe, H.~F. Smith, C.~D. Sogge, and Y.~Zhou.
\newblock On abstract {S}trichartz estimates and the {S}trauss conjecture for
  nontrapping obstacles.
\newblock {\em Trans. Amer. Math. Soc.}, 362(5):2789--2809, 2010.

\bibitem{IL17}
A.~Iosevich and B.~Liu.
\newblock Pinned distance problem, slicing measures and local smoothing
  estimates.
\newblock {\em arXiv preprint arXiv:1706.09851}, 2017.

\bibitem{ITU16}
A.~Iosevich, K.~Taylor, and I.~Uriarte-Tuero.
\newblock Pinned geometric configurations in euclidean space and riemannian
  manifolds.
\newblock {\em https://arxiv.org/pdf/1610.00349v1.pdf}, 2016.

\bibitem{KS85}
M.~Kaneko and G.-I. Sunouchi.
\newblock On the {L}ittlewood-{P}aley and {M}arcinkiewicz functions in higher
  dimensions.
\newblock {\em Tohoku Math. J. (2)}, 37(3):343--365, 1985.

\bibitem{KS18}
T.~Keleti and P.~Shmerkin.
\newblock New bounds on the dimensions of planar distance sets.
\newblock {\em https://arxiv.org/abs/1801.08745}, 2018.

\bibitem{Korean16}
Y.~Koh and I.~Seo.
\newblock On weighted {$L^2$} estimates for solutions of the wave equation.
\newblock {\em Proc. Amer. Math. Soc.}, 144(7):3047--3061, 2016.

\bibitem{Liu17}
B.~Liu.
\newblock Group actions, the mattila integral and applications.
\newblock {\em arXiv preprint arXiv:1705.00560}, 2017.

\bibitem{Liu17chains}
B.~Liu.
\newblock Improvement on $2$-chains inside thin subsets of euclidean spaces.
\newblock {\em https://arxiv.org/abs/1709.06814}, 2017.

\bibitem{LR15}
R.~Luc\`{a} and K.~Rogers.
\newblock Avergae decay of the fourier transform of measures with appli-
  cations.
\newblock {\em to appear in J. Eur. Math. Soc.}, 2015.

\bibitem{Mat87}
P.~Mattila.
\newblock Spherical averages of {F}ourier transforms of measures with finite
  energy; dimension of intersections and distance sets.
\newblock {\em Mathematika}, 34(2):207--228, 1987.

\bibitem{Mat15}
P.~Mattila.
\newblock {\em Fourier analysis and Hausdorff dimension}, volume 150.
\newblock Cambridge University Press, 2015.

\bibitem{OO15}
D.~Oberlin and R.~Oberlin.
\newblock Spherical means and pinned distance sets.
\newblock {\em Commun. Korean Math. Soc.}, 30(1):23--34, 2015.

\bibitem{Orp17}
T.~Orponen.
\newblock On the distance sets of {A}hlfors-{D}avid regular sets.
\newblock {\em Adv. Math.}, 307:1029--1045, 2017.

\bibitem{PS00}
Y.~Peres and W.~Schlag.
\newblock Smoothness of projections, {B}ernoulli convolutions, and the
  dimension of exceptions.
\newblock {\em Duke Math. J.}, 102(2):193--251, 2000.

\bibitem{RV94}
A.~Ruiz and L.~Vega.
\newblock Local regularity of solutions to wave equations with time-dependent
  potentials.
\newblock {\em Duke Math. J.}, 76(3):913--940, 1994.

\bibitem{Shm16}
P.~Shmerkin.
\newblock On distance sets, box-counting and {A}hlfors regular sets.
\newblock {\em Discrete Anal.}, pages Paper No. 9, 22, 2017.

\bibitem{shm17}
P.~Shmerkin.
\newblock On the hausdorff dimension of pinned distance sets.
\newblock {\em arXiv preprint arXiv:1706.00131}, 2017.

\bibitem{Sjo93}
P.~Sj\"olin.
\newblock Estimates of spherical averages of {F}ourier transforms and
  dimensions of sets.
\newblock {\em Mathematika}, 40(2):322--330, 1993.

\bibitem{Wol99}
T.~Wolff.
\newblock Decay of circular means of {F}ourier transforms of measures.
\newblock {\em Internat. Math. Res. Notices}, (10):547--567, 1999.

\end{thebibliography}

\end{document}